\newtheorem{Thm}{Theorem}
\newtheorem{Lem}[Thm]{Lemma}
\newtheorem{Prop}[Thm]{Proposition}
\newtheorem{Cor}[Thm]{Corollary}
\newtheorem{Conj}{Conjecture}
\newtheorem{Rque}{Remark}
\newtheorem{Def}{Definition}
\newtheorem{Exam}{Example}
\def\rit{{\mathbb R}}
\def\cit{{\mathbb C}}
\def\nit{{\mathbb N}}
\def\qit{{\mathbb Q}}
\def\zit{{\mathbb Z}}
\def\pit{{\mathbb P}}
\def\0{{\mathcal O}}
\def\W{{\mathcal W}}
\def\U{{\mathcal U}}
\def\K{{\mathcal K}}
\def\C{{\mathcal C}}
\def\S{{\mathcal S}}
\def\T{{\bf T}}
\def\P{{\mathfrak P}}
\def\h{{\mathfrak h}}
\def\locus{\mathop{\rm locus}\nolimits}
\def\RatCurves{\mathop{\rm RatCurves}\nolimits}
\title{Minimal rational curves on complete toric manifolds}
\author{Baohua Fu and Jun-Muk Hwang}
\begin{document}
\maketitle
\section{Introduction}

For a complete uniruled smooth variety $X$,  let $\RatCurves^n(X)$ be
the normalized space of rational curves on $X$. For an irreducible
component $\K$ of $\RatCurves^n(X)$, let $\rho: \U\to \K$ and $\mu: \U\to X$ be the associated
universal family morphisms. An irreducible component $\K$
of $\RatCurves^n(X)$ is called {\em a dominating component} if $\mu$ is dominant and {\em a minimal  component} if furthermore for a general point
$x \in X$, the variety $\mu^{-1}(x)$ is complete. Members of a minimal component are called {\em minimal
rational curves}. The {\em degree} of $\K$ is the degree of the intersection of  $-K_X$ with any member in $\K$.
 For a fixed minimal component $\K$ and a general point $x \in X$,
we  define the tangent map: $\tau_x: \K_x := \rho(\mu^{-1}(x)) \dasharrow \pit(T_x X)$  by
$\tau_x(\alpha) = \pit(T_xC)$, where $C= \mu(\rho^{-1} (\alpha))$ is smooth at $x$.  We denote by $\C_x$ the closure of
the image of $\tau_x$ in $\pit(T_xX)$, which is called the {\em variety of minimal rational tangents} (VMRT for short) of
$\K$ at the point $x \in X$.

It turns out that the projective geometry of $\C_x \subset \pit(T_xX)$ encodes a lot of the geometrical properties on $X$,
which can be a useful tool in solving a number of problems on uniruled varieties (see the surveys \cite{Hw1}, \cite{Mok}).  Thus for a given $X$, it is worthwhile to determine $\C_x$.
This has been worked out for many examples when $X$ has Picard number 1 (loc. cit.). However, not many cases
with large Picard number have been investigated. The main result of this paper, Corollary \ref{VMRT},
gives a description of $\C_x$ for a complete toric manifold.
 This implies that the minimal components of $\RatCurves^n(X)$ correspond bijectively to some special primitive collections
(Theorem \ref{primitive}), which can be easily read off from the fan  defining $X$.
As an application, we get examples of complete toric manifolds which do not have any minimal component
 (Example \ref{e.1}). Also, we will be able to classify toric Fano manifolds admitting a minimal component  of degree $n = \dim X$ (Proposition \ref{degn}). Finally, in Section 5,  we propose a conjectural upper bound for $\rho_X(\dim \C_x +1)$ when $X$ is a
toric Fano manifold, which is motivated by a conjecture of Mukai (see \cite{Ca2}).
\vspace{0.3cm}

{\em Acknowledgements:} The first named author would like to thank
KIAS (Seoul) and the University of Hong Kong for their
hospitality. The second author was supported by the Korea Research
Foundation Grant
 (KRF-2006-341-C00004).

\section{Some general results}

Here we collect a couple of general facts for which we do not have good references.
The first one is more or less obvious.

\begin{Lem}\label{lem1} Let $X$ be a complete variety on which a connected algebraic group $G$  acts with
 an open orbit $X_0 \subset X$. Suppose the stabilizer ${\rm Stab}_G(x)\subset G$ of a point $x \in X_0$ is
connected. Then for any dominating component $\K$ of ${\rm RatCurves}^n(X)$, the subvariety $\K_x \subset \K$
is irreducible. \end{Lem}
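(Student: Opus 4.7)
The plan is to reduce to showing that the fiber $\mu^{-1}(x) \subset \U$ is irreducible, since $\K_x = \rho(\mu^{-1}(x))$ will then be irreducible as the image of an irreducible variety. First, because $G$ is connected, it acts trivially on the discrete set of irreducible components of $\RatCurves^n(X)$; hence $G$ preserves $\K$ and acts on the universal family $\U$, making both $\rho$ and $\mu$ equivariant. The total space $\U$ is a $\pit^1$-bundle over the irreducible variety $\K$, hence is itself irreducible, and so is the nonempty open subset $\U_0 := \mu^{-1}(X_0)$.

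Next I would use the $G$-equivariance of $\mu \colon \U_0 \to X_0 = G/H$, where $H := {\rm Stab}_G(x)$, to identify $\U_0$ with the associated bundle $G \times^H \mu^{-1}(x)$. Concretely, the morphism $\Phi \colon G \times \mu^{-1}(x) \to \U_0$, $(g,y) \mapsto g \cdot y$, is surjective because $X_0 = G \cdot x$; and $\Phi(g,y) = \Phi(g',y')$ forces $g^{-1}g' \in H$ (as $\mu$ sends both points to $x$), so the fibers of $\Phi$ are precisely the orbits of the free $H$-action $h \cdot (g,y) = (gh^{-1}, hy)$.

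Finally, decompose $\mu^{-1}(x) = F_1 \cup \cdots \cup F_k$ into irreducible components. Then the irreducible components of $G \times \mu^{-1}(x)$ are exactly the $G \times F_i$, and $H$ permutes these via its induced action on the finite set $\{F_1, \ldots, F_k\}$. Because $H$ is connected and this set is discrete, $H$ must fix each $F_i$; hence the quotient $\U_0 \cong (G \times \mu^{-1}(x))/H$ has exactly $k$ irreducible components. Irreducibility of $\U_0$ therefore forces $k = 1$, i.e.\ $\mu^{-1}(x)$ is irreducible, as desired.

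The main technical point in this plan is the associated-bundle identification $\U_0 \cong G \times^H \mu^{-1}(x)$; connectedness of $H$ enters precisely at the last step to guarantee that $H$ preserves each irreducible component of the fiber. Without that assumption the quotient could identify distinct $F_i$'s and still be irreducible, which is exactly why the hypothesis on ${\rm Stab}_G(x)$ is needed.
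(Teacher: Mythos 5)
Your proposal is correct in outline but takes a genuinely different route from the paper. The paper Stein-factorizes $\mu\colon \U \to X$ into $\U \to \U' \stackrel{\rho'}{\to} X$ with $\rho'$ finite; the connected stabilizer ${\rm Stab}_G(x)$ acts on the finite set $\rho'^{-1}(x)$, hence fixes a point $y$ with ${\rm Stab}_G(y)={\rm Stab}_G(x)$, so the orbit $G\cdot y$ is an open subset of $\U'$ isomorphic to $X_0$, forcing $\rho'$ to be birational and $\mu^{-1}(x)$ to be connected (whence $\K_x$ irreducible). You instead work directly on the fiber via the associated-bundle description $\U_0\cong G\times^H\mu^{-1}(x)$ and let the connected group $H$ act on the set of irreducible components of $\mu^{-1}(x)$ rather than on the points of a finite fiber; this has the advantage of producing irreducibility (not merely connectedness) of $\mu^{-1}(x)$ in one stroke, at the cost of a heavier-looking quotient construction. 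One step in your write-up deserves more care: from the purely set-theoretic description of $\Phi$ (a surjection whose fibers are the $H$-orbits) you may only conclude that $\U_0$ is the union of the $k$ irreducible \emph{constructible} sets $G\cdot F_i$, so irreducibility of $\U_0$ gives only that some $G\cdot F_{i_0}$ is \emph{dense}; a stray component $F_j$ with $G\cdot F_j\subset\overline{G\cdot F_{i_0}}$ is not yet excluded. To get ``exactly $k$ components'' you need the images $G\cdot F_i$ to be \emph{closed}, which follows once you know $\Phi$ is a geometric quotient (equivalently, that the equivariant map $\U_0\to X_0=G/H$ really exhibits $\U_0$ as the homogeneous fibre space $G\times^H\mu^{-1}(x)$, so that $\Phi$ is submersive and images of $H$-saturated closed sets are closed); combined with your observation that $G\cdot F_i\cap\mu^{-1}(x)=F_i$ (which again uses connectedness of $H$), this finishes the argument. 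You correctly flag this identification as the main technical point, so the gap is one of justification rather than of substance; citing the standard fact on homogeneous fibre spaces (e.g.\ Serre or Popov--Vinberg) would close it.
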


\begin{proof}
$G$ acts on the universal family $\rho: \U \to X$ of $\K$. This action descends to the  finite
morphism $\rho': \U' \to X$ obtained by the Stein factorization of $\rho$. Since the stabilizer ${\rm Stab}_{G}(x)$ is connected, it fixes a point $y \in \rho'^{-1}(x)$, i.e., ${\rm Stab}_G(y) = {\rm Stab}_G(x)$.
It follows that $\U'$ contains an open subset isomorphic to $X_0$, i.e., $\rho'$ is birational. This means that $\K_x$ is irreducible. \end{proof}

For the next one, we need some notation. Let $G$ be a connected algebraic group.
For an irreducible closed algebraic subvariety $S \subset G$   which contains the identity $e \in G$, let $[S]$ be the subgroup of $G$ generated by elements in $S$ and
let $\langle S \rangle$ be the smallest closed algebraic subgroup of $G$ containing $S$. Clearly, $[S] \subset \langle S \rangle.$
\begin{Prop}\label{general}
Let $G$ be a connected commutative algebraic group over the complex numbers.
If $S$ is a closed irreducible algebraic subvariety of  $G$  contained in an (not necessarily closed) analytic subgroup $H$ of $G$, then
$\langle S \rangle = [S] \subset H$.
\end{Prop}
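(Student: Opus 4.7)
The plan is to realize $\langle S\rangle$ explicitly as a finite product $S^{2k_0}$. For each $k\ge 1$, let $S^k:=\{s_1\cdots s_k : s_i\in S\}$, the image of the irreducible variety $S\times\cdots\times S$ ($k$ factors) under multiplication $G^k\to G$, and let $V_k$ denote its Zariski closure. Each $V_k$ is irreducible, and since $e\in S$ we have $S^k\subset S^{k+1}$, so $V_k\subset V_{k+1}$. As $\dim V_k\le\dim G$, this ascending chain stabilizes at some irreducible closed subvariety $V:=V_{k_0}\subset G$, so that $V_k=V$ for all $k\ge k_0$.

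I would next verify that $V$ is a closed algebraic subgroup of $G$. Since $S^{k_0}$ is Zariski dense in $V$, the closure of $V\cdot V$ equals $\overline{S^{2k_0}}=V$, hence $V\cdot V\subset V$. To pass from submonoid to subgroup, observe that for any $v\in V$, the subset $V\cdot v\subset V$ is closed, irreducible, and of the same dimension as $V$ (right translation by $v$ being an automorphism of $G$), so $V\cdot v=V$, yielding $v^{-1}\in V$. Since any closed algebraic subgroup containing $S$ must contain each $S^k$ and hence each $V_k$, we conclude $V=\langle S\rangle$.

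The main step is to show $V\subset[S]$. Since $S^{k_0}$ is constructible (as the image of a morphism of varieties) with Zariski closure the irreducible variety $V$, it contains a Zariski open dense subset $U\subset V$. For any $v\in V$, both $U$ and $vU^{-1}$ are open dense in $V$ (the inverse map and left translation by $v$ being automorphisms of $V$), so they must intersect. Picking $u,u'\in U$ with $u=v(u')^{-1}$ gives $v=uu'\in U\cdot U\subset S^{2k_0}\subset[S]$. The reverse inclusion $[S]\subset V$ is immediate, so $[S]=V=\langle S\rangle$; the containment $[S]\subset H$ is then automatic, as $H$ is a subgroup containing $S$. I do not foresee serious obstacles; the most delicate point is the submonoid-to-subgroup upgrade, and I note in passing that commutativity of $G$ does not seem essential to the argument.
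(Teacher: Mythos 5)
Your proof is correct, but it follows a genuinely different route from the paper's. The paper works with the holomorphic foliation of $G$ by the cosets of the analytic subgroup $H$: it iterates the operation $Z \mapsto I(Z) = (Z-S)+S$, observes that $I(Z)$ is the union of the translates of $S$ meeting $Z$ and hence stays inside the leaf through $e$, and identifies the stabilized iterate $I^n(S)$ with $[S]$. You instead run the classical Chevalley--Borel argument (cf.\ Borel, \emph{Linear Algebraic Groups}, I.2.2): the closures $V_k=\overline{S^k}$ stabilize to a closed subgroup $V=\langle S\rangle$, and the constructible/dense-open trick gives $V=U\cdot U\subset S^{2k_0}\subset [S]$, whence $[S]=\langle S\rangle$; the inclusion $[S]\subset H$ then costs nothing, since $[S]$ is by definition the smallest abstract subgroup containing $S$ and $H$ is an abstract subgroup containing $S$. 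Your route is more elementary and more general --- as you note, it uses neither commutativity, nor the ground field $\cit$, nor any property of $H$ beyond its being a subgroup --- and it also deals cleanly with a point the paper takes for granted: the image of $(Z-S)\times S$ under the group law is a priori only constructible, not closed, so identifying the stabilized set with the honest subgroup $[S]$ really does require the dense-open argument you supply (all the more so because the leaf $H$ is not Zariski closed, so passing to Zariski closures inside a leaf needs care). What the foliation picture buys the authors is a direct geometric account of why everything stays inside the non-algebraic subgroup $H$, which is the form in which the Proposition is applied in Corollary~\ref{dim}; your proof shows that this containment is in fact automatic once one knows $[S]$ is already algebraic.
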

\begin{proof}
We define $\W$ to be the holomorphic foliation on $G$ given by the cosets of $H$.
Let $\S$ be the family of subvarieties of $G$ obtained by $G$-translates of $S$.
Note that our assumption implies that every member of $\S$ is contained in a leaf of $\W$.

Let $Z \subset G$ be a closed irreducible subvariety lying in a single leaf $W$ of $\W$.
Let $Z-S:=\{z-s | z\in Z, s\in S\}$, which is an irreducible closed subvariety of $G$.
We define $I(Z)$ to be the image of the group action: $(Z-S) \times S \to G$, which is again an irreducible closed subvariety of $G$.
Note that $I(Z)$ is the union of members of $\S$ intersecting $Z$, which implies that $I(Z) \subset W$.
Define inductively $I^{i+1}(Z)= I(I^i(Z))$.  We have $I^{i}(Z) \subset I^{i+1}(Z) \subset W$.
This implies that  $I^n(Z)=I^{m}(Z)$ for $m \geq n = \dim G$.

 Note that by the definition, if $Z$ is contained in $[S]$, then
$I(Z)$ is contained in $[S]$. Thus we have $I^m(S) \subset [S]$ for any $m \in \nit$. On the other hand,  each element of $[S]$ is contained in
some $I^{m}(S),$ hence $[S] \subset I^n(S).$  Consequently, $[S] = I^n(S)$ is an algebraic variety, i.e.,
$[S]$ is an algebraic subgroup. It follows that
 $\langle S \rangle = [S]=I^n(S) \subset W$.
\end{proof}

\begin{Cor}\label{dim}
In Proposition \ref{general}, suppose there exists a Lie subalgebra $\h$ of the Lie algebra of $G$
 such that $S \subset \exp(\h)$, then $\dim \langle S \rangle \leq \dim \h$.
\end{Cor}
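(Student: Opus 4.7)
The plan is to deduce this directly from Proposition \ref{general} by exhibiting $\exp(\h)$ as an analytic subgroup of $G$ of the correct dimension. Since $G$ is a connected commutative complex algebraic group, its Lie algebra $\g = \Lie(G)$ is abelian, and the exponential map $\exp : \g \to G$ is a surjective homomorphism of complex Lie groups whose kernel is a discrete subgroup (so that $\exp$ is a local biholomorphism). In particular, any Lie subalgebra $\h \subset \g$ is automatically an abelian subalgebra, and $H := \exp(\h)$ is an (immersed, not necessarily closed) analytic subgroup of $G$ of complex dimension equal to $\dim \h$.

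By hypothesis $S \subset \exp(\h) = H$, so Proposition \ref{general} applies and yields $\langle S \rangle = [S] \subset H$. It remains to bound $\dim \langle S \rangle$ by $\dim H$. This is a general fact: $\langle S \rangle$ is a closed algebraic (hence analytic) subgroup of $G$ that is set-theoretically contained in the analytic submanifold $H$, so locally near the identity it lies in the tangent space $\h$ of $H$; thus $\dim \langle S \rangle \leq \dim H = \dim \h$.

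The statement is essentially a packaging of Proposition \ref{general} with the elementary observation about $\exp$ for commutative $G$, so I do not anticipate a substantive obstacle. The only point that requires a small amount of care is that $H$ need not be closed in $G$ (its closure can be strictly larger), so one cannot replace $H$ by a closed algebraic subgroup of dimension $\dim \h$ before invoking Proposition \ref{general}; it is precisely the strength of Proposition \ref{general} — that $\langle S \rangle$ lands in the possibly non-closed analytic subgroup $H$ itself — that makes the dimension inequality come out sharp.
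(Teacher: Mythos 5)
Your proposal is correct and is exactly the intended derivation: the paper states Corollary \ref{dim} without proof as an immediate consequence of Proposition \ref{general}, and your argument supplies the two steps the authors leave implicit, namely that $\exp(\h)$ is an analytic subgroup of dimension $\dim\h$ (using commutativity of $G$ and discreteness of $\ker\exp$) and that a closed algebraic subgroup contained in it has dimension at most $\dim\h$. The only point stated a bit quickly is that containment of $\langle S\rangle$ in the merely immersed subgroup $H$ forces $\Lie\langle S\rangle\subset\h$ (e.g.\ because $\exp(\h+\Lie\langle S\rangle)=\exp(\h)$ would otherwise have too large a dimension), but this is standard and you correctly flag the non-closedness of $H$ as the delicate feature.
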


 Note that Corollary \ref{dim} is a generalization of Lemma I.3.2 in \cite{Zak} for abelian varieties. Our proof is completely different from Zak's.

\section{Varieties of minimal rational tangents on a complete toric manifold}

From now on, let $X$ be a complete toric manifold of dimension $n$ and $\T \subset X$ be the open orbit of
the torus $(\cit^*)^n$.
A rational curve $C \subset X$ is called {\em a standard curve} if under the normalization $\nu: \pit^1 \to C$, we have
$\nu^*(TX) \simeq \0(2) \oplus \0(1)^p \oplus \0^{n-p-1}$. It is easy to see that the deformations of $C$
 correspond to a dominating component $\K^C$ of ${\rm RatCurves}^n(X).$  As in Introduction, for $x \in \T$, we denote by $\K^{C}_x$ the family
 of deformations of $C$ passing through
$x$. Then $\dim \K^C_x = p$ and $\K^C_x$ is irreducible by Lemma \ref{lem1}.
Just as before, we can define a variety of tangents $VT_x^{C} \subset \pit(T_xX)$ by taking the closure of tangents at $x$
of curves in $\K_x^C$ which are smooth at $x$.
\begin{Thm}\label{standard}
Let $X$ be a complete toric manifold and $C \subset X$ a standard curve. Then the variety of tangents $VT_x^{C}$ of
$\K^{C}$ at a  $x \in \T$ is a  linear subspace in $\pit(T_xX)$.
\end{Thm}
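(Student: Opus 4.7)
The plan is to exploit the Cox (homogeneous-coordinate) description of $X$ in order to write down deformations of $C$ explicitly and compute the tangent map $\tau_x$ directly. First, Lemma \ref{lem1} applied to $G = (\cit^*)^n$ acting on $X$ with open orbit $\T$ and stabilizer ${\rm Stab}_G(x) = \{e\}$ (trivially connected) gives that $\K_x^C$ is irreducible of dimension $p$, hence $VT_x^C$ is irreducible. Using the derivative of the orbit map at $x$, I would identify $T_xX$ canonically with $\Lie((\cit^*)^n)$, so that $\pit(T_xX) \cong \pit^{n-1}$ becomes canonical.

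Fix a normalization $\nu: \pit^1 \to X$ of $C$ with $\nu(0) = x$, and let $m$ be the number of rays in the fan of $X$. In Cox coordinates, $\nu$ corresponds to an $m$-tuple of polynomials $(\nu_1(z),\dots,\nu_m(z))$ with prescribed degrees $d_i = C \cdot D_i$, not vanishing simultaneously outside the irrelevant locus. The total space of such tuples is a vector space $\cit^M$ with $M = m + \sum_i d_i$, and $\K^C$ is birational to the quotient of an open subset of $\cit^M$ by ${\rm PGL}_2$ and by the Cox torus (the kernel $H$ of the natural surjection $(\cit^*)^m \to (\cit^*)^n$). Imposing $\nu(0) = x$ cuts out an $(M-n)$-dimensional linear subspace of $\cit^M$, and the residual quotient by ${\rm Aut}(\pit^1,0) \times H$ yields a parametrization of an open subset of $\K_x^C$.

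The crucial point is that the tangent direction $[\nu'(0)] \in \pit(T_xX)$ is computed componentwise by the derivatives $\nu_i'(0)$, each of which is a linear form in the coefficients of $\nu_i$. After gauge-fixing the reparametrizations of $\pit^1$ that fix $0$ (for instance, by using the translation direction of ${\rm Aut}(\pit^1,0)$ to normalize one nonzero coefficient), the projectivized map $\tau_x$ is given by these linear forms modulo the common Cox-torus scaling, so its image in $\pit(T_xX)$ is a linear subspace. Taking closures, $VT_x^C$ is a linear subspace.

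The main obstacle is the careful handling of the two quotients --- by the isotropy in ${\rm Aut}(\pit^1)$ at $0$ and by the Cox torus $H$ --- so as to verify that after these quotients the tangent map truly factors through a linear map on a projective space. The standard splitting $\nu^*TX \cong \0(2) \oplus \0(1)^p \oplus \0^{n-p-1}$ should ensure that the $\0(1)^p$ summands contribute exactly $p$ independent directions in $T_xX$ (beyond the direction along $C$), matching $\dim \K_x^C = p$ and confirming the expected dimension of the image.
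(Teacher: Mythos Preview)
Your Cox-coordinate approach is genuinely different from the paper's and, with the details filled in, does give linearity of $VT_x^C$. The paper instead combines the trivialization $\Omega_X(\log D)\simeq \0_X^{\oplus n}$ with Proposition~\ref{general} and Corollary~\ref{dim}: for a standard $C$ through $e\in\T$, the global logarithmic $1$-forms annihilating $TC$ form a space of dimension $\geq n-p-1$ (it contains $H^0(C,T_X^*|_C)$), so Corollary~\ref{dim} forces the smallest toric subvariety $\langle C\rangle$ containing $C$ to satisfy $\dim\langle C\rangle\leq p+1$; since $\locus(\K^o_e)$ has dimension $p+1$ and there is no positive-dimensional family of toric subvarieties through a fixed point, one obtains $\locus(\K^o_e)=\langle C\rangle$ near $e$, whence $VT_e^C=\pit(T_e\langle C\rangle)$ is linear. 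The payoff of the paper's route is that it simultaneously identifies $\locus(\K_e)$ as a toric subvariety of $X$, which is exactly what the proof of Corollary~\ref{VMRT} uses; your computation yields the linear space directly (it is the projectivization of the span in $N_\cit\cong T_eX$ of those ray generators with $C\cdot D_i>0$) but does not by itself exhibit that toric subvariety. On execution: your ``main obstacle'' largely dissolves once you fix a lift $\tilde{x}\in(\cit^*)^m$ and impose $\nu_i(0)=\tilde{x}_i$ --- the stabilizer of $\tilde{x}$ in $H$ is then trivial (all $\tilde{x}_i\neq 0$), and the projectivized tangent $[(\nu_i'(0)/\tilde{x}_i)_i \bmod \h]$ is already invariant under ${\rm Aut}(\pit^1,0)$ (the scaling $z\mapsto\lambda z$ rescales all $\nu_i'(0)$ by the same $\lambda$, while $z\mapsto z/(cz+1)$ leaves each $\nu_i'(0)$ unchanged), so no gauge-fixing is needed to compute the \emph{image}; in particular your parenthetical about a ``translation direction'' of ${\rm Aut}(\pit^1,0)$ is off, since translations do not fix $0$. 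Note also that your argument, as written, never actually uses the standard-curve hypothesis --- it gives linearity of $VT_x^C$ for any rational curve meeting $\T$, with the splitting $\0(2)\oplus\0(1)^p\oplus\0^{n-p-1}$ serving only to pin down the dimension.
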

\begin{proof}
Let $D = X \setminus \T$ be the boundary divisor.
It is well-known that $\Omega_X(\log D) \simeq \0_X^{\oplus n}$ (Prop. 3.1 \cite{Oda}).
Given $x \in \T$, we may identify $\T$ with the
 torus such that $x$ corresponds to the identity $e \in (\cit^*)
 ^n$.
For an irreducible and reduced curve $C$ passing through $e$, let $\langle C \rangle$ be the smallest toric subvariety of $X$ containing $C$. Then $$\langle C \rangle \cap \T = \langle C \cap \T \rangle$$ where
the right hand side is in the sense of Proposition \ref{general}. Let $V$ be the subspace of $H^0(X, \Omega_X(\log D)) \simeq \cit^n$ consisting of vectors which annihilate
the tangent vectors $TC$ along $C$.  The vector
space $V$ contains the space $H^0(C, T_X^*|_C) \subset H^0(C, \Omega_X(\log D)|_C)=H^0(X,  \Omega_X(\log D))$.
As $C$ is a standard curve, the space  $H^0(C, T_X^*|_C)$ has dimension $n-p-1$, thus
we get $\dim V \geq n-p-1$.
By Corollary \ref{dim},  this implies  $\dim \langle C \rangle \leq n- \dim V  \leq p+1$.

Let $\K^o \subset \K_e^C$ be the open subset consisting of standard curves. Denote by $\locus(\K^o)$ the closure of the union
of members of $\K^o$. Then $\locus(\K^o)$ is a $(p+1)$-dimensional constructible
subset   of $X$.  Consider the $p$-dimensional family of toric subvarieties $\{\langle C_t \rangle\}_{t\in \K^o}$ which pass
through the fixed point $e$. Since there is no positive-dimensional family of  toric subvarieties fixing a point,
we have $\langle C_t \rangle = \langle C_{t'} \rangle$ for two general points $t, t' \in \K^o$.
Thus $\locus(\K^o) \subset \langle C_t \rangle$ for some general $t \in \K^o$. Since
$\dim(\locus(\K^o)) = p+1$, while $\dim (\langle C_t \rangle) \leq p+1$,   we have   $\langle C_t \rangle = {\locus}(\K^o)$ in a neighborhood of $e$. In particular, we have $VT^{C}_e = \pit(T_e \langle C_t \rangle)$ which is linear since the toric subvariety  $\langle C_t \rangle$ is smooth at $e$.
\end{proof}
\begin{Rque}
The same argument works for singular complete toric varieties if one assumes that the standard curve $C$ is contained in the smooth locus of $X$.
\end{Rque}
\begin{Rque}
Let  $Y^n$ be a smooth projective variety dominated by a  toric
variety $X$, i.e. there exists a surjective morphism $\pi: X \to
Y$. If $\rho_Y=1$, then by Theorem 1 \cite{OW}, $Y$ is isomorphic
to $\pit^{n}$. In Section 1 \cite{OW}, they raised  the question
whether $Y$ is always toric. The answer is not always affirmative.
In fact, let $E$ be a vector bundle over a projective toric
manifold $X$, take an ample line bundle $L^{-1}$  such that
$E\otimes L^{-1}$ is globally generated. This gives a surjective
map $L^{\oplus d} \to E$ for some integer $d$. Then we obtain a
surjective map $\pit^{d-1} \times X \to \pit(E)$, while $\pit(E)$
is not necessarily a toric variety. We do not know any other
example of projective manifolds dominated by a toric variety. Note
that we may assume that $\pi$ is generically finite by taking
Stein factorization and resolution. Then, using a similar argument
as in the proof of the precedent theorem and that of Prop. 2
\cite{HM}, we can show that for any standard curve $C$ in $Y$, the
variety of tangents $VT_y^C$ at a general point $y \in Y$ is a
disjoint union of linear subspaces. Thus $Y$ contains projective
spaces with trivial normal bundles. But these projective spaces do
not always give rise  to a projective bundle structure on $Y$.
\end{Rque}

\begin{Cor}\label{VMRT}
Let $X$ be a complete toric manifold and let $\K$ be a minimal component of degree $p+2$. The variety of minimal rational tangents $\C_x$ at
a general point $x\in X$ is an irreducible linear subspace. The locus $\locus(\K_e)$ of curves in $\K$ passing through the identity $e \in \T$ is a toric
subvariety in $X$, which is isomorphic to $\pit^{p+1}$ with trivial normal bundle.
\end{Cor}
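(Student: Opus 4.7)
The plan is to deduce the corollary by combining Lemma~\ref{lem1}, Theorem~\ref{standard}, and a small additional observation extracted from the proof of Theorem~\ref{standard} that upgrades the toric subvariety $\langle C_t\rangle$ to a projective space with trivial normal bundle.

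First I would check that a general member $C$ of the minimal component $\K$ is a standard curve in the sense of Section~3: freeness of $C$ combined with minimality and the degree condition $-K_X\cdot C=p+2$ with $p=\dim\K_x$ force the splitting $\nu^{*}TX\simeq\0(2)\oplus\0(1)^{p}\oplus\0^{n-p-1}$, so $\K$ coincides with $\K^{C}$ and $\C_x=VT^{C}_x$. Lemma~\ref{lem1} applied to the torus $(\cit^{*})^n$, whose stabilizer on $\T$ is trivial and hence connected, yields irreducibility of $\K_x$; Theorem~\ref{standard} then yields linearity. This settles the VMRT assertion.

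For the locus statement, the proof of Theorem~\ref{standard} already produces the toric subvariety $Z:=\langle C_t\rangle$ of dimension $p+1$, smooth at $e$ (and hence smooth everywhere), coinciding with $\locus(\K^{o})$ in a neighbourhood of $e$; density of standard curves in $\K_e$ gives $\locus(\K_e)=Z$. To establish triviality of $N_{Z/X}$, I would identify the $(n-p-1)$-dimensional subspace $V\subset H^{0}(X,\Omega_X(\log D))\simeq\cit^{n}$ of that proof with the annihilator of $T_eZ$, an identification that follows from Corollary~\ref{dim} together with the equality $\langle C\rangle\cap\T=\langle C\cap\T\rangle$ and a dimension count. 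Via the log-cotangent exact sequence $0\to N^{*}_{Z/X}\to\Omega_X(\log D)|_Z\to\Omega_Z(\log D_Z)\to 0$, which holds because the toric subvariety $Z$ meets the boundary in a simple normal crossings divisor, the sections $V|_Z$ vanish in $\Omega_Z(\log D_Z)$ on the open orbit of $Z$, hence everywhere, so they land in $N^{*}_{Z/X}$ and trivialise it. Consequently $N_{Z/X}\simeq\0_Z^{n-p-1}$; adjunction then gives $-K_Z\cdot C=p+2=\dim Z+1$, and since $\K$ restricted to $Z$ has tangent map at $e$ dominating $\pit(T_eZ)$, the Cho--Miyaoka--Shepherd-Barron characterisation of projective space forces $Z\simeq\pit^{p+1}$.

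The main obstacle I anticipate is the toric interpretation of the log-cotangent sequence on $Z\subset X$: both the identification of $V$ with $\mathrm{Ann}(T_eZ)$ and the resulting trivialisation of $N^{*}_{Z/X}$ require careful use of the fact that, for a toric subvariety $Z$, the characters of $\T$ restrict compatibly to characters of $Z\cap\T$, giving the log-cotangent sequence a particularly explicit form. These points are essentially encoded in the proof of Theorem~\ref{standard}, but need to be unpacked with some care.
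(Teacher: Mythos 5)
The first half of your argument (general members of $\K$ are standard, Lemma~\ref{lem1} gives irreducibility, Theorem~\ref{standard} gives linearity of $\C_x$) is exactly the paper's, and is fine. The second half, however, departs from the paper and has a genuine gap. The paper does not attempt to prove triviality of the normal bundle or the isomorphism with $\pit^{p+1}$ by working intrinsically on $Z=\locus(\K_e)$: it quotes Lemma~3.3 of \cite{Ara}, which (once the VMRT is known to be linear) produces an \emph{immersed} $\pit^{p+1}$ with trivial normal bundle by an argument carried out on the smooth ambient variety $X$, and only then uses the toric structure to say that $Z$, being normal, is the isomorphic image of that $\pit^{p+1}$. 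Your route needs $Z$ itself to be a smooth variety with simple normal crossings boundary before any of its key steps can be run, and that is precisely what you have not established.

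Concretely: the parenthetical ``smooth at $e$ (and hence smooth everywhere)'' is a non sequitur --- every toric variety is smooth along its open orbit, so smoothness at $e$ carries no information about the boundary points of $Z$, and a priori the closure of a subtorus orbit in $X$ can be badly behaved there. Without smoothness of $Z$ (and the SNC property of $D\cap Z$), the exact sequence $0\to N^{*}_{Z/X}\to\Omega_X(\log D)|_Z\to\Omega_Z(\log D_Z)\to 0$ is not available, so the trivialisation of $N^{*}_{Z/X}$ by the constant sections in $V$ does not get off the ground; the argument becomes circular, since smoothness of $Z$ is essentially equivalent to what you are trying to prove. A second problem is the final appeal to \cite{CMS}: that characterisation of $\pit^{p+1}$ is stated for projective varieties, whereas $Z$ is only known to be complete ($X$ itself need not be projective --- see Example~\ref{e.1}), so you would need a version of the theorem valid in the complete case or a separate argument that $Z$ is projective. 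Both difficulties evaporate if you follow the paper and invoke Araujo's Lemma~3.3 first, reserving the toric input (normality of $Z$) for the last step of identifying the immersed $\pit^{p+1}$ with $Z$.
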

\begin{proof}
As is well-known, a general member of $\K$ is a standard curve. Thus Theorem \ref{standard} implies that $\C_x$ is an irreducible linear subspace for $x \in X$ general.
As shown in \cite{Ara} (Lemma 3.3), $\locus(\K_e)$ is an immersed $\pit^{p+1}$ with trivial normal bundle.
By the proof of Theorem \ref{standard}, the subvariety $\locus(\K_e)$ is equal to $\langle C \rangle$ for a general curve $C$ in $\K_e$, thus it is
a toric subvariety in $X$. Being a toric subvariety, the variety
$\locus(\K_x)$ is itself normal, thus  it is smooth and isomorphic to $\pit^{p+1}$.
\end{proof}
\begin{Cor}\label{projbundle}
Let $X$ be a complete toric manifold and let $\K$ be a minimal
component of $\RatCurves^n(X)$ of degree $p+2$. Then there exist
an open subset $X_0$ containing the torus $\T$, a smooth variety
$U$ and a $\pit^{p+1}$-bundle structure $\phi_0: X_0 \to U$ such
that any curve in $\K$ meeting $X_0$ is a line on a fiber of
$\phi_0$.
\end{Cor}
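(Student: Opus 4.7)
The plan is to build $\phi_0$ from the $\T$-translates of the toric subvariety $Z := \locus(\K_e)$ furnished by Corollary \ref{VMRT}. That corollary gives $Z \simeq \pit^{p+1}$ with trivial normal bundle and exhibits $Z$ as a toric subvariety, so $Z$ is associated to a rank $p+1$ sublattice $M \subset N$; let $\T_M \subset \T$ be the corresponding subtorus. For each $t \in \T$ the translate $Z_t := t \cdot Z$ is again a toric $\pit^{p+1}$ with trivial normal bundle, and since $\T$ is commutative we have $Z_t = Z_{t'}$ iff $tt'^{-1} \in \T_M$; thus the translates are parametrized by the quotient torus $\T' := \T/\T_M$ of dimension $n-p-1 = h^0(N_{Z/X})$, and they already cover $\T$.

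Next I would set $X_0 := \bigcup_{t \in \T} Z_t$ and construct $\phi_0$ as the geometric quotient by the $\T_M$-action. To see that $X_0$ is open, note that at a general $z \in Z$ the tangent vectors to the family $\{Z_t\}_{t \in \T'}$ provide a trivialization of $N_{Z/X}$, so the translates fill a Zariski neighborhood of $Z$; after shrinking, $X_0$ is the locus where this family is transverse and smooth. Since $\T_M$ acts on $X_0$ preserving each $Z_t$ as an open orbit closure, the quotient $U := X_0 / \T_M$ is well-defined and smooth. Choosing a splitting $N = M \oplus N'$ and using the transversal subtorus $\T_{N'}$ realizes $U$ as an open subset of the toric variety attached to the projected fan $\pi(\Sigma) \subset N'_{\rit}$, and local triviality of $\phi_0 : X_0 \to U$ as a $\pit^{p+1}$-bundle then follows from $N_{Z_t/X_0} \simeq \0^{\oplus (n-p-1)}$ together with smoothness of the family.

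Finally, to show that every $C \in \K$ meeting $X_0$ is a line in some fiber: for general $x \in \T$ and $C \in \K_x$ we have $C \subset \locus(\K_x) = Z_x$, and triviality of $N_{Z_x/X}$ gives $(-K_X)|_{Z_x} = -K_{Z_x}$, so on $Z_x \simeq \pit^{p+1}$ the equation $p+2 = -K_X \cdot C = (p+2)\deg_{Z_x}(C)$ forces $\deg_{Z_x}(C) = 1$. A dimension count shows the family of such lines has dimension $(n-p-1) + 2p = n+p-1 = \dim \K$, so by irreducibility of $\K$ every member meeting $X_0$ is a line in some $Z_t$. The main obstacle I foresee is the middle step, namely promoting the set-theoretic quotient $X_0 \to X_0/\T_M$ to a genuine smooth $\pit^{p+1}$-bundle; this rests on the trivial normal bundle provided by Corollary \ref{VMRT} together with a careful transversality/toric argument on the fan side.
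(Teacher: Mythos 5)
Your route is genuinely different from the paper's: the paper disposes of this corollary in one line by invoking Araujo's Theorem~1.1 \cite{Ara} (the general result that when the VMRT at a general point is a linear $\pit^{p}$, some open subset of $X$ carries a $\pit^{p+1}$-bundle structure whose fiber-lines are exactly the members of $\K$ meeting it), adding only that this open set contains $(\cit^*)^n\cdot\pit^{p+1}$ and hence $\T$. You instead rebuild the bundle by hand from the toric geometry, which is an attractive and more self-contained idea, and most of it can be made to work: $Z=\locus(\K_e)$ is the closure of a subtorus $\T_M$, the stabilizer of $Z$ in $\T$ is exactly $\T_M$, and the ``quotient'' you want is simply the toric morphism $X_0\to \T/\T_M\simeq(\cit^*)^{n-p-1}$ induced by the projection $N\to N/M$, under which every cone of the sub-fan of $Z$ maps to $0$; by $\T$-equivariance over the base torus this is even a trivial $\pit^{p+1}$-bundle. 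So the step you single out as the main obstacle is fine once phrased torically (though ``geometric quotient by $\T_M$'' is not literally the right notion, since the fibers are orbit closures rather than orbits).

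There are, however, two genuine gaps. First, your openness argument for $X_0=\bigcup_t Z_t$ only shows that the translates fill a neighborhood of a \emph{general} point of $Z$; it says nothing at the lower-dimensional boundary orbits, and you cannot simply ``shrink'' $X_0$ because the fibers of $\phi_0$ must remain complete $\pit^{p+1}$'s. The correct argument is combinatorial: the cones of $\Sigma$ contained in $M_\qit$ form a sub-fan (closed under passage to faces, and by the primitivity argument in the proof of Theorem~\ref{primitive} no other cone of $\Sigma$ has relative interior meeting $M_\qit$), so $\T\cdot Z$ is the union of the corresponding affine charts $U_\sigma$ and hence open. Second, and more seriously, the final step is not ``by irreducibility of $\K$''. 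Your dimension count shows only that the fiber-lines form a dense subset of $\K$, whereas the corollary asserts that \emph{every} member meeting $X_0$ is a fiber-line. A member $C_0$ not known to be a fiber-line is a limit of lines $\ell_i\subset Z_{u_i}$, and when $u_i$ escapes $U$ there is no a priori reason for $C_0$ to lie in any fiber; one must show that such a limit cannot meet $X_0$ at all (for instance by continuity of $\phi_0$: a point of $C_0\cap X_0$ would force the $u_i$ to converge in $U$). This passage from ``general member'' to ``every member meeting $X_0$'' is precisely the nontrivial content of Araujo's theorem that the paper cites, so it cannot be dismissed with an appeal to irreducibility.
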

\begin{proof}
By Theorem 1.1 \cite{Ara}, there exists such an open subset $X_0$, which contains the image of the torus action $(\cit^*)^n \cdot \pit^{p+1}$.
In particular, $X_0$ contains $\T$.
\end{proof}

This corollary implies that every projective toric manifold contains an open dense subset which is a projective bundle. This seems
to be a new result. In general, one cannot hope that $\phi_0$ can be  extended to a morphism of $X$ in codimension 1,
as shown by the blow-up of $\pit^2$ at two points.

\section{Combinatorial description of minimal rational curves}
We are now going to relate minimal components on $X$ to combinatorial data of the fan corresponding to $X$. The basic results on
toric varieties can be found in \cite{Oda}. Recall that $X$ is described by a finite fan $\Sigma$ in the vector space $N_\qit = N \otimes_\zit \qit$, where
$N$ is a free abelian group of rank $n=\dim X$. As $X$ is smooth and complete,  the support of $\Sigma$ is the whole space $N_\qit$ and
every cone in $\Sigma$ is generated by a part of a basis of $N$.  For any $i$, we denote by $\Sigma(i)$ the
set of all $i$-dimensional cones in $\Sigma$. For each  $\sigma \in \Sigma(1)$, we take a primitive generator of $\sigma \cap N$.
We denote by $G(\Sigma)$ the set of all such generators, which is in bijection with all 1-dimensional cones in $\Sigma$.
The Picard number $\rho_X$ of $X$ is given by $\sharp G(\Sigma) - n$ (Cor. 2.5 \cite{Oda}).

\begin{Def}[\cite{Bat}]
i) A non-empty subset $\P=\{x_1, \cdots, x_k\}$ of $G(\Sigma)$ is called a {\em primitive collection} if for any $i$, the elements of
$\P \setminus \{x_i\}$ generate a $(k-1)$-dimensional cone in $\Sigma$, while $\P$ does not generate a $k$-dimensional cone in $\Sigma$.

ii) For a primitive collection $\P=\{x_1, \cdots, x_k\}$ of $G(\Sigma)$, let $\sigma(\P)$ be the unique cone in $\Sigma$ which contains $x_1+\cdots+x_k$ in its interior. Let $y_1, \cdots, y_m$ be generators of $\sigma(\P)$, then there exists a unique equation with $a_i \in \zit_{>0}$:
$$
x_1+ \cdots+ x_k = a_1 y_1 +\cdots a_m y_m.
$$
This is the {\em primitive relation} associated to $\P$. The {\em degree} of $\P$ is $\deg(\P) = k-\sum_i a_i$.
The {\em order} of $\P$ is $k$.
\end{Def}

\begin{Thm}\label{primitive}
Let $X$ be a complete toric manifold of dimension $n$. Then there is a bijection between minimal components of degree $k$ on $X$ and
primitive collections $\P=\{x_1, \cdots, x_k\}$ of $G(\Sigma)$ such that $x_1+\cdots+x_k=0$.
\end{Thm}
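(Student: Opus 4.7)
The plan is to build the bijection explicitly: to a minimal component $\K$ of degree $k$ I associate the primitive ray generators of the fan of $V := \locus(\K_e)$, and conversely to a primitive collection $\P$ with $\sum x_i = 0$ I associate the family $\K_\P$ of lines on $\T$-translates of a toric $\pit^{k-1}$-subvariety $V_\P$.

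For the map $\K \mapsto \P$, Corollary \ref{VMRT} gives that $V = \locus(\K_e) \cong \pit^{k-1}$ is a toric subvariety of $X$ with trivial normal bundle. As a toric subvariety, $V$ corresponds to a saturated sublattice $N' \subset N$ of rank $k-1$, and its fan $\Sigma'$ in $N'_\qit$ is the $\pit^{k-1}$-fan: $k$ primitive rays $y_1, \ldots, y_k$ with $\sum y_i = 0$, any $k-1$ of them forming a $\zit$-basis of $N'$. The first substantial step is to show $y_i \in G(\Sigma)$ for each $i$; I would deduce this from triviality of $N_{V/X}$. Near each torus-fixed point $p \in V$ (vertex of a maximal cone $\tau \in \Sigma'$), the germ of $X$ at $p$ splits analytically as $V \times \mathbb{A}^{n-k+1}$, so the maximal cone of $\Sigma$ at $p$ decomposes as $\tau \oplus \sigma_p$ with $\sigma_p$ a smooth cone transverse to $N'_\qit$. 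The rays of $\tau$ are then rays of this ambient max cone; ranging over the $k$ fixed points of $V$ yields $y_i \in G(\Sigma)$ for all $i$. Setting $\P := \{y_1, \ldots, y_k\}$, the primitive collection axioms follow: any $k-1$ of the $y_i$ generate a maximal cone of $\Sigma'$ (a face of a max cone of $\Sigma$, hence in $\Sigma$), while $\P$ itself cannot span a cone of $\Sigma$ because $\sum y_i = 0$ contradicts strong convexity.

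For $\P \mapsto \K_\P$, starting from a primitive collection $\P = \{x_1, \ldots, x_k\}$ with $\sum x_i = 0$: smoothness of $\Sigma$ makes any $k-1$ elements of $\P$ part of a $\zit$-basis of $N$, so $N'_\P := \zit\langle \P \rangle$ is saturated of rank $k-1$. The $k$ cones generated by $\P \setminus \{x_i\}$ (in $\Sigma$ by the primitive collection axiom) and their faces form a complete smooth fan in $N'_{\P,\qit}$ matching the fan of $\pit^{k-1}$, defining a toric subvariety $V_\P \cong \pit^{k-1}$. Let $\K_\P$ be the irreducible family of rational curves consisting of lines on $\T$-translates of $V_\P$, of dimension $n + k - 3$ (irreducibility follows from Lemma \ref{lem1}). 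Using $-K_X = \sum_{v \in G(\Sigma)} D_v$ and observing that $V_\P \cap D_v$ has codimension $\geq 2$ in $V_\P$ for $v \notin \P$, a general line $\ell \subset V_\P$ satisfies $D_v \cdot \ell = 1$ if $v \in \P$ and $0$ otherwise, yielding $-K_X \cdot \ell = k$. The family dominates $X$ since $\T \cdot V_\P$ has dimension $n$, and for general $x \in \T$ only one $\T$-translate of $V_\P$ contains $x$, so $\mu^{-1}(x)$ is a single complete $\pit^{k-2}$. Hence $\K_\P$ is a minimal component of degree $k$.

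Bijectivity is straightforward: $\P \mapsto \K_\P \mapsto \P'$ recovers $\P$ because $\locus((\K_\P)_e) = V_\P$ has its fan determined by $\P$, and $\K \mapsto \P \mapsto \K'$ recovers $\K$ because $V_\P = V = \locus(\K_e)$ in this case. The main obstacle in this plan is the first step above, placing the rays $y_i$ of $V$'s fan inside $G(\Sigma)$. This relies essentially on triviality of $N_{V/X}$ from Corollary \ref{VMRT}; without trivial normal bundle, rays of a toric subvariety $V \subset X$ can arise as intersections of higher-dimensional cones of $\Sigma$ with $N'_\qit$ and need not lie in $G(\Sigma)$, as illustrated by the non-trivially embedded diagonal $\pit^2 \subset \pit^2 \times \pit^2$.
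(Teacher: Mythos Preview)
Your overall architecture matches the paper's: both directions of the bijection go through the toric $\pit^{k-1}$-subvariety, and bijectivity is checked just as you do. There are, however, two points of genuine divergence worth noting.

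\textbf{Forward direction.} You are right to flag the step ``$y_i \in G(\Sigma)$'' as the crux; the paper simply writes ``Let $\Sigma' \subset \Sigma$ be the sub-fan which gives $P$'' and moves on, so you are filling in something the paper leaves implicit. Your local-splitting argument has a wrinkle, though: a $T'$-fixed point $p$ of $V$ is not a priori a $T$-fixed point of $X$, so speaking of ``the maximal cone of $\Sigma$ at $p$'' needs justification. The trivial normal bundle does ultimately force this (via the $T$-equivariant $\pit^{k-1}$-bundle structure of Corollary~\ref{projbundle}, which gives a toric splitting $N \cong N' \oplus N''$ over the base), but the passage from analytic splitting to fan splitting is not as immediate as your sketch suggests. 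Your diagonal-in-$\pit^2\times\pit^2$ remark shows you see exactly where the difficulty lies.

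\textbf{Backward direction.} Here you take a genuinely different route. The paper restricts the Euler--Jaczewski sequence
\[
0 \to \Omega_X^1 \to \bigoplus_i \0_X(-D_i) \to \0_X^{\rho_X} \to 0
\]
to $V_\P$, uses primitivity to see that $\0_X(-D_v)|_{V_\P}\cong\0_{V_\P}$ for $v\notin\P$, and reads off directly that $N_{V_\P/X}$ is trivial; a line $\ell\subset V_\P$ is then a standard curve with $p=k-2$, so $\K^{\ell}_e$ has dimension $k-2$ and is automatically the complete $\pit^{k-2}$ of lines through $e$ in $V_\P$. Your argument instead computes $D_v\cdot\ell$ directly (in fact $V_\P\cap D_v=\emptyset$ for $v\notin\P$, stronger than your codimension-$2$ claim), gets $-K_X\cdot\ell=k$, and then identifies the component by a dimension count: $\ell$ is free, so the component through $[\ell]$ is smooth of dimension $-K_X\cdot\ell+n-3=n+k-3$, matching your torus-translate family. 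Both approaches work; the paper's buys the normal-bundle triviality (useful elsewhere) in one stroke, while yours is more elementary and avoids the Euler--Jaczewski machinery.
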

\begin{proof}
If $\K$ is a minimal component in $\RatCurves^n(X)$ of degree $k$, by Corollary \ref{VMRT}, there exists a smooth toric subvariety $P$ in $X$
which is isomorphic to $\pit^{k-1}$. Let $\Sigma' \subset \Sigma$ be the sub-fan which gives $P$. Let $\P:=G(\Sigma')=\{x_1, \cdots, x_k\}$. Then for any  $x_i \in \P$, the elements in $\P \setminus \{x_i\}$ generate a $(k-1)$-dimensional cone and we have $x_1+\cdots+x_k=0$,
which implies in particular that
$\P$ does not generate a $k$-dimensional cone in $\Sigma$ since every cone of $\Sigma$ is generated by a part of a basis of $N$. This implies that $\P$ is a primitive collection of $G(\Sigma)$.

Now assume $\P=\{x_1, \cdots, x_k\}$ is a primitive collection such that $x_1+\cdots+x_k=0$.
Let $\Sigma'$ be the sub-fan of $\Sigma$ determined by $\P$. Then $\Sigma'$ defines a toric subvariety $P$ of $X$ which is isomorphic to $\pit^{k-1}$.
Let $D = \sum_{i=1}^d D_i$ be the decomposition of the boundary divisor $D=X \setminus \T$ into irreducible components,
where $d=\rho_X+n$ is the number of elements in $G(\Sigma)$.
 We have
a  Euler-Jaczewski sequence (see for example Section 2.1 \cite{OW}) as follows:
$$
0 \to \Omega_X^1 \to \oplus_{i=1}^d \0_X(-D_i) \to \0_X^{\oplus \rho_X} \to 0.
$$
As $P$ is primitive, there is no other elements of $G(\Sigma)$ in the vector space $\qit x_1 +\cdots +\qit x_k$.
This shows that $\0_X(-D_i)|_P \simeq \0_P$ if $D_i$ is not a divisor corresponding to one of the one dimensional cones
in $\Sigma$ generated by $x_1, \cdots, x_k$.
The precedent exact sequence gives
$$
0 \to \Omega_X^1|_P \to \0_P(-1)^{k} \oplus \0_P^{\oplus \rho_X+n-k} \to \0_P^{\oplus \rho_X} \to 0.
$$

Combining with the usual Euler sequence for $P \simeq \pit^{k-1}$, this implies
that the normal bundle of $P$ in $X$ is trivial.  Take a line $C$ in $P$, then its normal bundle in $X$ is isomorphic to $\0(1)^{k-2} \oplus \0^{n-k+1}$,
i.e. $C$ is a standard curve.
Using the action of $(\cit^*)^n$, we get an irreducible
family of rational curves $\K$, which is a minimal component in $\RatCurves^n(X)$.
\end{proof}

When $X$ is a projective toric manifold, there always exists  a minimal component in $\RatCurves^n(X)$. Theorem \ref{primitive} has the following
corollary, which has been firstly proved by Batyrev (Prop. 3.2 \cite{Bat}) in a completely different way.
\begin{Cor}
Let $\Sigma$ be a fan which defines  a projective toric manifold $X$. Then there exists a primitive collection $\P=\{x_1, \cdots, x_k\}$
such that $x_1+\cdots+x_k=0$.
\end{Cor}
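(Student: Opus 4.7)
The plan is to reduce the corollary to Theorem \ref{primitive} by producing, on any smooth projective toric variety $X$, at least one minimal component of $\RatCurves^n(X)$.

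First, since $X$ is toric, it is rational, hence uniruled, so $\RatCurves^n(X)$ has at least one dominating component. I would then use the projectivity of $X$ to extract a minimal one: fix an ample divisor $H$ and, among all dominating components $\K'$ of $\RatCurves^n(X)$, pick one, say $\K$, on which a member has minimal $H$-degree (equivalently, minimal $(-K_X)$-degree, since once $H$-degree is bounded both are bounded). For such a $\K$ and a general $x \in X$, the fiber $\mu^{-1}(x)$ must be proper: otherwise a one-parameter family in $\mu^{-1}(x)$ would limit to a reducible rational cycle through $x$, and by Mori's bend-and-break one of its components would give a dominating family through $x$ of strictly smaller $H$-degree, contradicting the choice of $\K$. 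Thus $\K$ is a minimal component.

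Second, once a minimal component $\K$ is in hand, say of degree $k$, Theorem \ref{primitive} furnishes a primitive collection $\P = \{x_1, \ldots, x_k\} \subset G(\Sigma)$ with $x_1 + \cdots + x_k = 0$, which is the conclusion.

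The only delicate point is the existence step; the combinatorics is entirely handled by Theorem \ref{primitive}. Here projectivity of $X$ is essential, because completeness alone does not suffice — as the paper's Example \ref{e.1} shows, a complete toric manifold may fail to have any minimal component, matching Batyrev's observation that the conclusion of the corollary can fail for general complete (non-projective) toric manifolds.
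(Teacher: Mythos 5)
Your proposal is correct and follows the same route as the paper: the paper simply asserts that a projective (uniruled) toric manifold has a minimal component and then invokes Theorem \ref{primitive}, exactly as you do. Your expansion of the existence step --- taking a dominating component of minimal ample degree and noting that non-properness of $\mu^{-1}(x)$ would produce a broken cycle yielding a dominating family of smaller degree --- is the standard argument the paper leaves implicit, so there is nothing further to add.
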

\begin{Exam}\label{e.1}\upshape
The assumption of projectivity of $X$ in the precedent corollary is important as shown by the following example in Section 2.3 \cite{Oda}.
Let $e_1, e_2, e_3$ be a basis of $\zit^3$. Let $$e_4=-e_1-e_2-e_3, \;  e_5=-e_1-e_2, \;  e_6=-e_2-e_3, \; e_7=-e_1-e_3.$$ Let $\Sigma$ be the complete regular fan in $\rit^3$ obtained by
joining $0$ with the simplices of the   triangulated tetrahedron in the displayed figure.
We have $e_1+e_2+e_5=0$, however the set $\{e_1, e_2, e_5\}$ is not a primitive collection,
since the cone generated by $e_2, e_5$ is not in $\Sigma$. Similarly we see that
$\{e_2, e_3, e_6\}, \{e_1, e_3, e_7\}$ are not primitive collections. This implies that there is no minimal component in $\RatCurves^n(X)$. This gives another way to see that the toric variety $X$ defined by $\Sigma$ is smooth complete but non-projective. The sub-fan in $\Sigma$ generated by $e_1, e_2, e_5$ gives a toric subvariety which is
isomorphic to $\pit^2\setminus \{pt\}$.
If we denote by $C$ the invariant curve corresponding to the cone generated by $e_1, e_3$, then its
cohomology class is given by $e_1+e_2+e_5=0$, which implies that its normal bundle in $X$ is given by
$\0(1) \oplus \0$, i.e. $C$ is a standard curve. By Theorem \ref{standard}, the variety of tangents $VT_x^C$
determined by $C$ is isomorphic to $\pit^1$, while all members of $\K_x^C$ lie in an open
set in $\pit^2 \setminus \{pt\}$. If we denote by $\pi: Y \to X$ the blow-up of $X$ along the invariant curve $C$,
then $Y$ is still non-projective since $C$ deforms in $X$ (Prop. 2 \cite{Bon}). The fan $\Sigma(Y)$ of $Y$
has a new element $e_0=e_1+e_3 = -e_7$. Thus the non-projective variety $Y$ has
a unique minimal component and its VMRT is just one point.

If one blows up $X$ along the invariant curve corresponding to the cone generated
by $e_3$ and $e_7$, we obtain a projective variety $X'$ (cf. Section 2.3 \cite{Oda}).
The fan $\Sigma'$ of $X'$ has a new element $e_8=-e_1$ in $G(\Sigma')$. This implies that
there exists a unique minimal component in $\RatCurves^n(X')$ and its VMRT is just a point.

\includegraphics[scale=0.55]{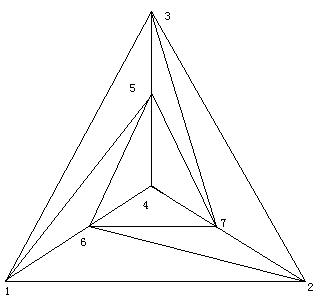}

\end{Exam}

As an application of Theorem \ref{primitive}, we give an upper bound for the number of minimal components in $\RatCurves^n(X)$.
\begin{Prop}
Let $X$ be a complete toric manifold of dimension $n$ and Picard number $\rho_X$. For an integer $p$, we denote by $n_p$ the number of
minimal components in $\RatCurves^n(X)$ of degree $p+2$. Then we have

i)   $$\sum_{p=0}^{n-1} n_p (p+2) \leq n+\rho_X.$$

ii)  If $n_p$ and $n_q$ are non-zero for some integers $p$ and $q$, then $p+q \leq n-2$.

iii) If $p \geq (n-1)/2$, then $n_p \leq 1$.
\end{Prop}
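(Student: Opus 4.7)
The plan is to deduce all three parts from Theorem~\ref{primitive} together with a single disjointness/transversality lemma about primitive collections summing to zero. By that theorem a minimal component of degree $p+2$ is exactly a primitive collection $\P \subset G(\Sigma)$ with $|\P| = p+2$ and $\sum_{x \in \P} x = 0$. My key lemma is: if $\P \neq \P'$ are two such primitive collections, then $\P \cap \P' = \emptyset$ and the $\qit$-spans of $\P$ and $\P'$ in $N_\qit$ intersect only at the origin. Both assertions rest on the uniqueness principle for a complete smooth fan: every $v \in N_\qit$ lies in the relative interior of a \emph{unique} cone of $\Sigma$, so whenever $v = \sum a_i g_i$ with $a_i > 0$ and $\{g_i\} \subset G(\Sigma)$ generating a cone of $\Sigma$, the set $\{g_i\}$ must be the primitive generator set of this unique cone.

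For the disjointness I would set $A = \P \setminus \P'$, $B = \P' \setminus \P$, $Q = \P \cap \P'$ and assume $Q \neq \emptyset$. From $\sum \P = \sum \P' = 0$ one gets $\sum A = -\sum Q = \sum B$, and since $A, B$ are proper subsets of $\P, \P'$, primitivity makes them each generate a cone of $\Sigma$. The common vector $\sum A = \sum B$ then lies in the relative interior of both cones, so uniqueness forces $A = B$; disjointness $A \cap B = \emptyset$ gives $A = B = \emptyset$ and hence $\P \subset \P'$, contradicting primitivity of $\P$. The transversality of the spans follows by the same mechanism: for a hypothetical nonzero $v \in \mathrm{span}(\P) \cap \mathrm{span}(\P')$, shifting the coefficients using $\sum \P = 0$ and $\sum \P' = 0$ expresses $v$ as a positive combination over proper subsets $S \subsetneq \P$ and $T \subsetneq \P'$; uniqueness then yields $S = T \subset \P \cap \P' = \emptyset$, forcing $v = 0$.

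Given the key lemma, the three bounds are immediate. Part (i) follows at once: disjointness of the collections gives $\sum_p n_p (p+2) = \sum_{\P} |\P| \leq |G(\Sigma)| = n + \rho_X$. For (ii), two distinct minimal components yield transverse subspaces of dimensions $p+1$ and $q+1$ in $N_\qit$, forcing $(p+1)+(q+1) \leq n$, i.e.\ $p + q \leq n-2$. Part (iii) is the specialization $p = q$: if $n_p \geq 2$, then two distinct degree-$(p+2)$ components give $2p \leq n-2$, whence $p < (n-1)/2$. The main obstacle I expect is the case analysis in the disjointness step, in particular ruling out the degenerate subcases where $A$ is empty (handled by a direct primitivity contradiction for $\P \subsetneq \P'$) and where $\sum A$ happens to vanish (ruled out by linear independence of the generators of any smooth cone); once these are dispatched, everything else is bookkeeping.
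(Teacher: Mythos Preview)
Your proof is correct. For part (i), your disjointness argument is essentially the paper's own combinatorial proof: the paper simply removes one common element $x_{k+1}=y_{h+1}$ and observes that the remaining sums agree, hence the cones agree, hence the collections agree. Your version with $A=\P\setminus\P'$, $B=\P'\setminus\P$ is a mild generalisation of the same idea, with the same uniqueness-of-cone mechanism and the same degenerate cases to dispatch.

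For parts (ii) and (iii), however, your route genuinely differs from the paper's. The paper does \emph{not} give a combinatorial argument there; it simply invokes the geometric fact (from \cite{Hw2}) that two VMRTs $\C_x,\C'_x\subset\pit(T_xX)$, being linear subspaces, are disjoint, whence the corresponding $(p+1)$- and $(q+1)$-dimensional subspaces of $T_xX$ meet only at $0$. You instead prove the combinatorial analogue directly: the $\qit$-spans of two distinct zero-sum primitive collections in $N_\qit$ meet only at $0$, via the same ``express $v$ as a positive combination over a proper subset and invoke uniqueness of the carrier cone'' trick. This is the exact fan-theoretic shadow of the VMRT disjointness (identifying $T_eX$ with $N_\cit$, the tangent space of $\locus(\K_e)\simeq\pit^{p+1}$ is precisely $\mathrm{span}_\cit(\P)$), so the two arguments are not unrelated; but yours is fully self-contained within toric combinatorics and avoids the external reference, which is a real gain. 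The paper's approach, on the other hand, makes the result a formal consequence of a general VMRT principle and would apply beyond the toric setting whenever such linearity and disjointness hold.
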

\begin{proof}
All three statements follow from the proof of Prop. 2.2
\cite{Hw2}, where it is shown that two linear subspaces in
$\pit(T_xX)$, which are  components of  VMRTs, have empty
intersection in $\pit(T_xX)$ for $x\in X$ general. For the claim
i), a more combinatorial proof can be given as follows. Suppose we
have two primitive collections $\P_1=\{x_1, \cdots, x_{k+1}\}$ and
$\P_2=\{y_1, \cdots, y_{h+1} \}$ such that $x_1+\cdots+x_{k+1} =
y_1+\cdots+y_{h+1}=0$. If $\P_1 \cap \P_2$ is non-empty, we may
assume $x_{k+1}=y_{h+1}$, then we get $x_1+\cdots+x_k =
y_1+\cdots+y_h$. As $x_1, \cdots, x_k$ and $y_1, \cdots, y_h$
generate  cones in $\Sigma$, this implies that the two cones are
the same, thus $\P_1=\P_2$. Now the claim i) follows from Theorem
\ref{primitive} and the fact that the number of elements in
$G(\Sigma)$ is equal to $n+ \rho_X$.
\end{proof}
\begin{Rque}\upshape
Even in dimension 2, one can construct many examples where the inequality in claim i) is an equality. If one restricts
to toric Fano manifolds, the inequality in i) becomes an equality for products of
copies of $S_3$ with projective spaces, where $S_3$ is the
the blow-up of $\pit^2$ at three general points. In Example 4.7 \cite{Sat}, Sato constructed a toric Fano 4-folds with $\rho = 5$ by blowing up $\pit^2 \times \pit^2$, for which the inequality i) becomes an equality. It seems to be subtle to classify those such that i) becomes an equality.
\end{Rque}

Another application of Theorem \ref{primitive} is the following.
Recall that if $X$ has  a minimal component of degree $n+1$, then $X \simeq \pit^n$ (\cite{CMS}).
The following proposition settles the next case when $X$ is a toric Fano manifold. Recall that for a toric Fano manifold, every element in $G(\Sigma)$ is a primitive vector in $N$, and $G(\Sigma)$ is the set of vertices of a polytope  $Q$ and each facet of $Q$ is the convex hull of a basis of $N$.
\begin{Lem}[Lem. 3.3 \cite{Cas}]\label{2prim}
Assume $X$ is a toric Fano manifold. If $\Sigma$ has two different primitive relations $x+y=z$ and
$x+w=v$, then $w=-x-y$ and $v=-y$. Therefore there are at most two primitive collections
of order 2 and degree 1 containing $x$, and the associated primitive relations are $x+y=(-w)$ and
$x+w = (-y)$.
\end{Lem}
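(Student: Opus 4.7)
The plan is to translate each degree-one primitive relation into a wall relation in $\Sigma$ and then exploit the rigidity of the Fano polytope $Q$.

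First, I would show that a primitive relation of order $2$ and degree $1$, say $x+y=z$, is realized as a wall relation: there exist $u_2,\ldots,u_{n-1}\in G(\Sigma)$ such that $\tau=\langle z,u_2,\ldots,u_{n-1}\rangle$ is an $(n-1)$-dimensional cone of $\Sigma$ whose two adjacent maximal cones are $\langle x,\tau\rangle$ and $\langle y,\tau\rangle$, with wall relation exactly $x+y=z$ (the other coefficients vanish because the degree equals $1$ combined with Fano positivity on invariant curves). Smoothness of $X$ then forces $y=z-x$ and makes each of $\{x,z,u_2,\ldots,u_{n-1}\}$ and $\{y,z,u_2,\ldots,u_{n-1}\}$ a $\zit$-basis of $N$. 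The same argument applied to $x+w=v$ yields $w=v-x$ together with maximal cones $\langle x,v,u'_2,\ldots,u'_{n-1}\rangle$ and $\langle w,v,u'_2,\ldots,u'_{n-1}\rangle$.

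Next I would pass to the quotient lattice $N/\zit x$ and the star fan of the ray through $x$, which is the fan of the smooth toric divisor $D_x \subset X$. The two maximal cones containing $x$ descend to maximal cones here, with $\bar z$ and $\bar v$ as ray generators. The key claim is that $\bar v=-\bar z$ in $N/\zit x$. To prove this I would use the Fano hypothesis — every facet of $Q$ is a $\zit$-basis of $N$ — to show that the only way two distinct degree-one walls can emanate from $x$ while keeping both $\{x,y\}$ and $\{x,w\}$ non-faces of $\Sigma$ is for them to be arranged antipodally in the star of the ray through $x$.

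Having established $\bar v+\bar z=0$, there is an integer $\lambda$ with $v+z=\lambda x$. Matching this relation against the basis $\{x,z,u_2,\ldots,u_{n-1}\}$ of $N$ and using primitivity of $v$ pins down $\lambda=1$, whence $v=x-z=-y$ and consequently $w=v-x=-z=-(x+y)$, which is the statement of the lemma. The main obstacle is justifying the antipodality claim in the second step: without the Fano hypothesis, the facets of $Q$ need not be $\zit$-bases and more exotic arrangements of the two walls around the ray through $x$ become possible, so the conclusion can fail.
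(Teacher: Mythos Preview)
The paper does not supply a proof of this lemma; it is quoted from Casagrande's paper and used as a black box in the argument that follows. So there is no in-paper proof to compare against. Casagrande's own argument proceeds through the divisorial contractions attached to primitive relations of degree one (such a relation on a smooth toric Fano yields a smooth equivariant blow-down), and the conclusion is extracted from the geometry of those contractions rather than from a bare lattice computation in the star of $x$.

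Your outline has the right shape but leaves the decisive step open. The antipodality claim $\bar v=-\bar z$ in $N/\zit x$ \emph{is} the content of the lemma, and what you write (``the only way two distinct degree-one walls can emanate from $x$ \ldots\ is for them to be arranged antipodally'') is a restatement rather than an argument. That every facet of $Q$ is a $\zit$-basis of $N$ is a local smoothness condition on individual maximal cones; by itself it imposes no relation between the positions of $\bar z$ and $\bar v$ in the star fan of $x$. One really has to play the two primitive relations off against each other, which is exactly what the contraction picture in Casagrande does and what your sketch does not yet supply.

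There is also a slip in the last step. From $v+z=\lambda x$ you cannot conclude $\lambda=1$ by primitivity of $v$: in the basis $\{x,z,u_2,\ldots,u_{n-1}\}$ the vector $v=\lambda x-z$ has coordinates $(\lambda,-1,0,\ldots,0)$, which is primitive for every integer $\lambda$. Some additional constraint --- for instance locating $v$ on a specific facet of the reflexive polytope $Q$, or comparing the two wall relations directly --- is needed to rule out the other values of $\lambda$. As it stands, the proposal identifies the right objects but does not close either of these two gaps.
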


\begin{Prop}\label{degn}
Let $X$ be a toric Fano manifold of dimension $n \geq 3$ which admits a minimal component  of degree $n$.
Then $X$ is isomorphic to  $\pit^{n-1} \times \pit^{1}$, $\pit(\0_{\pit^1}^{\oplus{n-1}} \oplus \0_{\pit^1}(1))$ or a blow-up
of a $\pit^{n-2}$ on $\pit^{n-1} \times \pit^{1}$. In particular, we have $\rho_X \leq 3$.
\end{Prop}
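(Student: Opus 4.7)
By Theorem \ref{primitive}, a minimal component of degree $n$ on $X$ corresponds to a primitive collection $\P = \{x_1, \ldots, x_n\} \subset G(\Sigma)$ with primitive relation $x_1 + \cdots + x_n = 0$. Smoothness of $\Sigma$ forces each $\P \setminus \{x_i\}$ to be part of a basis of $N$, so the sublattice $N'$ it spans is saturated of rank $n-1$; fix $e \in N$ with $N = N' \oplus \zit e$, and let $\bar\pi : N \to N/N' \cong \zit$ denote the projection. The sub-fan of $\Sigma$ generated by $\P$ is complete in $N'_\qit$ and is the fan of the toric $\pit^{n-1}$ from Corollary \ref{VMRT}. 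Hence every $y \in G(\Sigma) \setminus \P$ satisfies $\bar\pi(y) = \pm 1$; partition $G(\Sigma) \setminus \P = Y^+ \sqcup Y^-$ by sign, so that $\rho_X = |Y^+| + |Y^-|$.

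The plan is to first extract a $\pit^{n-1}$-fibration and then to count. Quotienting $\Sigma$ by $N'$ yields the complete fan of $\pit^1$, giving a toric Mori contraction $\phi : X \to \pit^1$ of relative dimension $n-1$ whose general fiber is $\pit^{n-1}$. For each index $i$, the $(n-1)$-dimensional cone $\sigma_i := \langle x_1, \ldots, \widehat{x_i}, \ldots, x_n \rangle$ is a wall of $\Sigma$, separating a maximal cone containing some $y^+_i \in Y^+$ from one containing some $y^-_i \in Y^-$. The wall relation has the form
\[
 y^+_i + y^-_i \;=\; \sum_{j \ne i} c_{ij}\, x_j, \qquad c_{ij} \in \zit,
\]
and the invariant curve associated to $\sigma_i$ has anticanonical degree $2 - \sum_j c_{ij}$; the Fano hypothesis forces $\sum_j c_{ij} \leq 1$.

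Combining the $n$ wall relations with the constraint $\sum_i x_i = 0$ and with Lemma \ref{2prim} (which bounds the number of order-$2$ degree-$1$ primitive relations containing a fixed ray), I would show $|Y^+|, |Y^-| \leq 2$ and exclude the case $|Y^+| = |Y^-| = 2$, yielding $\rho_X \leq 3$. This combinatorial step is the principal obstacle: the list of configurations is short, but the interplay of several primitive relations requires careful case-analysis to rule out non-Fano fans, and it is precisely here that Lemma \ref{2prim} is essential.

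Once the bound $\rho_X \leq 3$ is secured, classification is immediate by inspection of the constrained fan. If $\rho_X = 2$ then $|Y^+| = |Y^-| = 1$, and Kleinschmidt's classification combined with the Fano condition identifies $X$ as $\pit^{n-1}\times\pit^1$ or $\pit(\0_{\pit^1}^{\oplus(n-1)}\oplus\0_{\pit^1}(1))$, according to whether the unique wall relation reads $y^+ + y^- = 0$ or $y^+ + y^- = x_j$ for some $j$. If $\rho_X = 3$, we may assume $|Y^+| = 2$ and $|Y^-| = 1$; the second ray of $Y^+$ together with the wall relations then pins $X$ down to the toric blow-up of $\pit^{n-1}\times\pit^1$ along a $\pit^{n-2}$ contained in a single fiber of $\phi$.
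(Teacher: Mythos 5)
Your scaffolding matches the paper's: Theorem \ref{primitive} supplies the primitive collection $\P$ with $x_1+\cdots+x_n=0$, the hyperplane $H=N'_{\qit}$ splits the remaining rays into two sides, and Lemma \ref{2prim} is the intended tool. But there is a genuine gap exactly where you write ``I would show $|Y^+|,|Y^-|\le 2$ and exclude the case $|Y^+|=|Y^-|=2$'': that step is not a deferrable case-check, it \emph{is} the proposition, and the ingredients you assemble are not sufficient to carry it out. Your $n$ wall relations $y_i^++y_i^-=\sum_j c_{ij}x_j$ only see the rays adjacent to the distinguished walls $\sigma_i$, so they cannot by themselves bound $\sharp G(\Sigma)$. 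The missing observation is that for \emph{any} $y\in Y^+$ and $z\in Y^-$ the pair $\{z,y\}$ is automatically a primitive collection: the $2$-cone $\langle z,y\rangle$ would meet $H$ in a ray through its relative interior, and since $H$ is already covered by the complete subfan of the $\pit^{n-1}$, such a cone cannot belong to $\Sigma$. Granting this, the Fano condition makes each relation $z+y=\cdots$ of degree $1$ or $2$; if one side contained three rays $y_1,y_2,y_3$ and $z$ lay on the other, Lemma \ref{2prim} forces (after reordering) $z+y_1=0$ and $z+y_2=-y_3$, hence $y_1=y_2+y_3$, and a second application to the pairs $\{-y_2,y_1\}$, $\{-y_2,y_3\}$ gives $-y_2+y_1=-y_3$, a contradiction. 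The configuration $|Y^+|=|Y^-|=2$ is then killed because it forces $z_1=-y_1$, $z_2=-y_2$ and relations $-y_1+y_2=x_i$, $-y_2+y_1=x_j$, whence $x_i+x_j=0$, impossible for $n\ge 3$. Some argument of this kind has to appear; without it the proof does not close.

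Two secondary points. The fibration $\phi:X\to\pit^1$ does exist (no cone of $\Sigma$ can meet both open half-spaces, again because the subfan in $H$ is complete), but it is not a Mori contraction when $\rho_X=3$ and it is not actually needed. Also, your assertion that every $y\notin\P$ has $\bar\pi(y)=\pm1$ is justified only for rays forming a maximal cone with one of the walls $\sigma_i$; what the argument really requires is the weaker fact that the only rays of $\Sigma$ lying in $H$ are the $x_i$, so that a height-zero combination such as $-y_1+y_2$ appearing as the right-hand side of a primitive relation must be one of the $x_i$. Your endgame for $\rho_X=2$ (Kleinschmidt) and $\rho_X=3$ is fine once the bound is in place.
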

\begin{proof}
Note  that if $z_1, z_2 \in G(\Sigma)$ are two elements such that the two dimensional cone generated by them is not in $\Sigma$, then
 $\{z_1, z_2\}$ is a primitive collection.  The Fano condition on $X$ implies that either $z_1+z_2=0$ or
 there exists another element $z \in G(\Sigma)$ such that $z_1+z_2=z$ (see for example \cite{Cas} page 1480).

By Theorem \ref{primitive}, the assumption implies that there is a primitive collection $\P=\{x_1, \cdots, x_{n}\}$ such that
$x_1+\cdots+x_n=0$. The vector space $H:=\rit x_1 +\cdots+\rit x_n$ divides $N \otimes_\zit \rit$ into two sides. Assume that on one side, we have at least three elements $y_1, y_2, y_3$ in $G(\Sigma)$.
 Take an element $z \in G(\Sigma)$ on the other side of $H$, then $\{z, y_1\}$, $\{z, y_2\}$, $\{z, y_3\}$ are all primitive collections. By Lemma \ref{2prim}, up to reordering, we have $z+y_1=0$,  $z+y_2 = (-y_3)$ and $-y_2, -y_3$
 are all elements in $G(\Sigma)$.  This gives $y_1=y_2+y_3$. We consider the primitive collections $\{-y_2, y_1\}$,
 $\{-y_2, y_3\}$. By Lemma \ref{2prim}, we obtain that $-y_2 + y_1 = -y_3$, which contradicts $y_1=y_2+y_3$.
Thus there exist at most two elements on each side of $H$. Let $y_1, y_2$ be the two elements on one side
of $H$ and $z_1, z_2$ on the other side. If $z_i$ is not in $\{-y_1, -y_2\}$, then we may apply Lemma \ref{2prim},
which shows that $-y_1, -y_2$ are in $G(\Sigma)$, a contradiction. Up to reordering, we may assume
$z_1 = -y_1$ and $z_2 = -y_2$. Consider the primitive collections $\{-y_1, y_2\}$ and $\{-y_2, y_1\}$.
Their primitive relations are $-y_1+y_2 = x_i, -y_2+y_1 = x_j$ for some $i, j$.
This implies that $x_i+x_j=0$, which is not possible since $n \geq 3$.
In conclusion, the set $G(\Sigma)$ has at most  $n+3$ elements, while $\rho_X = \sharp G(\Sigma) -n$.   As a consequence, we have $\rho_X \leq 3$.

If $\rho_X=3$, there are two elements $y_1, y_2$ on one side of $H$ and  an element $z$ on the other side.
Up to reordering, the precedent argument shows that $z=-y_1$ and $-y_1+y_2=x_1$, i.e. $y_1+x_1=y_2$.
 This shows that $X$ is the  blow-up of $\pit^{n-1} \times \pit^1$ along the invariant subvariety isomorphic to  $\pit^{n-2}$  corresponding to the cone generated by $x_1, y_1$.

If $\rho_X=2$, i.e. $G(\Sigma)$ has $n+2$ elements. On each side
of $H$, there exists exactly one element in $G(\Sigma)$, say, $y$
and $ z$. As $\{y, z\}$ is a primitive collection, one has either
$y+z=0$ or $y+z =x_i$ for some $i$. The first case corresponds to
$\pit^{n-1} \times \pit^{1}$ while the second fan corresponds to
$\pit(\0_{\pit^1}^{\oplus{n-1}} \oplus \0_{\pit^1}(1))$.
\end{proof}

\section{A conjecture}
For a toric Fano manifold $X$ of dimension $n$, the pseudo-index $\iota_X$ is the smallest intersection number
$-K_X \cdot C$ among all rational curves on $X$. In Theorem 1 \cite{Ca2}, it is proven that $\rho_X \leq 2n$ and $\rho_X (\iota_X -1) \leq n$,
which confirms a conjecture of Mukai.
In analogous to this, we would like to propose the following
\begin{Conj} \label{Conj}
For a toric Fano manifold $X^n$ with $n \geq 3$, if there exists a minimal component $\K$ of degree $p+2$, then $\rho_X \cdot (p+1) \leq n(n+1)/2$.
\end{Conj}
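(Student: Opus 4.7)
The plan is to reinterpret the conjecture combinatorially via Theorem \ref{primitive}: a minimal component of degree $p+2$ corresponds to a primitive collection $\P = \{x_1, \ldots, x_{p+2}\} \subset G(\Sigma)$ with $x_1 + \cdots + x_{p+2} = 0$, and the proof of that theorem shows that the $(p+1)$-dimensional subspace $L := \qit x_1 + \cdots + \qit x_{p+2}$ contains no other element of $G(\Sigma)$. Since $\rho_X = |G(\Sigma)| - n$, the desired inequality becomes a purely combinatorial upper bound on $|G(\Sigma)|$ in terms of $n$ and $p$.

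The natural strategy is a contraction argument motivated by Corollary \ref{projbundle}, which already exhibits a $\pit^{p+1}$-bundle structure on an open subset $X_0 \supset \T$. I would attempt to realize this local structure as a toric morphism $\phi: X \to Y$ with $\dim Y = n-p-1$ by projecting the fan $\Sigma$ to the quotient lattice $N' = N / (L \cap N)$. Suppose one can arrange that (i) all elements of $G(\Sigma) \setminus \P$ project to distinct primitive vectors of $N'$, and (ii) the image fan defines a smooth complete toric Fano variety $Y$. Then $|G(\Sigma')| = |G(\Sigma)| - (p+2)$ and hence $\rho_Y = \rho_X - 1$; combining this with Casagrande's bound $\rho_Y \leq 2 \dim Y$ for toric Fano manifolds \cite{Ca2} yields $\rho_X \leq 2n - 2p - 1$. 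A short algebraic check then finishes the argument: setting $k := n - 2p$, the inequality $(2n-2p-1)(p+1) \leq n(n+1)/2$ rearranges to $(k-1)(k-2) \geq 0$, which holds for every integer $k$.

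The main obstacle is precisely in securing (i) and (ii). Two generators of $\Sigma$ outside $\P$ can project to proportional vectors of $N'$, the projected cones may fail to fill $N'_\qit$, and they need not be simplicial, so generically the image is only a partial or non-smooth fan. Even when these combinatorial defects can be repaired by a sequence of toric flips and blow-downs, there is no structural reason that the resulting $Y$ is Fano, and Casagrande's bound fails outside the Fano range. In effect one must run a toric minimal model program centered at the ray spanned by the class of a minimal rational curve $C \in \K$, in a way compatible with $\P$, and control how the Picard number changes at each step.

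An alternative that avoids constructing $Y$ altogether is to mimic Casagrande's direct combinatorial analysis: for each $v \in G(\Sigma) \setminus \P$ and each $x_i \in \P$, either $\{v, x_i\}$ spans a cone of $\Sigma$ or it is a primitive collection whose relation is tightly constrained by Lemma \ref{2prim}; an inclusion-exclusion argument on these relations should then bound $|G(\Sigma)|$. Either route appears to require genuinely new combinatorial input beyond what is assembled in the present paper, which is consistent with the statement being left as a conjecture.
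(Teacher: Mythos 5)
This statement is left as a \emph{conjecture} in the paper: the authors prove it only for $n=3$ (as an immediate consequence of Proposition \ref{degn}) and for $n=4$ (by inspecting Batyrev's classification \cite{Ba2}), and they exhibit equality cases. Your proposal is likewise not a proof but a strategy sketch, and you say so honestly; still, it is worth recording that the main route you outline cannot work even in principle. Your plan is to reduce, via the quotient fan $N/(L\cap N)$ and Casagrande's bound $\rho_Y\le 2\dim Y$, to the intermediate inequality $\rho_X\le 2n-2p-1$, and your algebraic verification that this implies the conjecture (via $(k-1)(k-2)\ge 0$ with $k=n-2p$) is correct. But the intermediate inequality is \emph{false}: take $X=S_3\times S_3$, so $n=4$, $\rho_X=8$, and the minimal components (fibers of the conic bundle structures on a factor) have degree $2$, i.e.\ $p=0$, whereas $2n-2p-1=7<8=\rho_X$. (The conjecture itself holds there: $8\le 10$.) So your hypotheses (i) and (ii) are not merely difficult to arrange --- they provably fail in exactly the extremal examples, because several generators outside $\P$ collapse onto proportional rays under the projection and $\rho_Y$ drops by more than $1$; no amount of repairing the quotient fan by flips and blow-downs can rescue an inequality that is numerically wrong. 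This is a stronger defect than the one you flag.

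Your second suggested route --- a direct combinatorial count pairing each $v\in G(\Sigma)\setminus\P$ with each $x_i\in\P$ and exploiting Lemma \ref{2prim} --- is closer in spirit to how such bounds (e.g.\ Casagrande's $\rho_X\le 2n$ in \cite{Ca2}) are actually proved, and is not contradicted by the examples above; but as you acknowledge, turning it into a bound of the shape $\rho_X(p+1)\le n(n+1)/2$ requires input not present in the paper. In sum: there is no proof here, and none in the paper either; the correct assessment is that the statement remains open for $n\ge 5$, and any attempt should be tested first against the equality cases $(S_3)^d\times\pit^{2d}$ and $(S_3)^d\times\pit^{2d+1}$, which already rule out your first approach.
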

Note that the equality holds if  $X \simeq (S_3)^d \times \pit^{2d}$ or $(S_3)^d \times \pit^{2d+1}$, where $S_3$ is the blow-up of $\pit^2$ along three general points.
In dimension 3, the equality also holds for the blow-up of $\pit^2 \times \pit^1$ along a $\pit^1$ contained in $\pit^2$.

Since $\rho_X \leq 2n$ by \cite{Ca2}, we may assume that $p+1 >\frac{n+1}{4}$ to check Conjecture \ref{Conj}.
When $n=3$, Conjecture \ref{Conj} is immediate from Proposition \ref{degn}.
To check  Conjecture \ref{Conj} for $n=4$, one only needs to
 show that if  $p=1$, then $\rho_X \leq 5$. We can use the classification of 4-dimensional toric Fano manifolds
 in \cite{Ba2} to check that if $\rho_X \geq 6$, then $p=0$, which shows that our conjecture holds for $n= 4$.

\bigskip
Baohua Fu

Institute of Mathematics, AMSS, 55 ZhongGuanCun East Road,

Beijing, 100190, China

 bhfu@math.ac.cn

\bigskip
Jun-Muk Hwang

 Korea Institute for Advanced Study, Hoegiro 87,

Seoul, 130-722, Korea

jmhwang@kias.re.kr

\end{document}